\documentclass[12pt, reqno]{amsart}
\usepackage[all]{xy}

\usepackage{amssymb}
\usepackage{xcolor}
\usepackage{amsthm}
\usepackage{amsmath,cancel}
\usepackage{amscd}
\usepackage{verbatim}
\usepackage{float}
\usepackage{color,mdframed}
\usepackage{bbm}
\usepackage[mathscr]{eucal}
\usepackage[all]{xy}
\usepackage{bm}

\pagestyle{myheadings}
\textheight=7 true in \textwidth=6 true in 
\hoffset=-0.5true in
\usepackage[OT2,T1]{fontenc}

\usepackage[draft]{hyperref}
\usepackage{mathtools}

\usepackage{comment}
\usepackage{calligra}
\usepackage{stmaryrd,float}
\DeclareMathAlphabet{\mathcalligra}{T1}{calligra}{m}{n}
\usepackage{enumitem}
\setitemize{leftmargin=*}
\setlist[enumerate]{leftmargin=*,label=\rm{(\arabic*)}}

\theoremstyle{plain}
\newtheorem{theorem}{Theorem}
\newtheorem{proposition}[theorem]{Proposition}

\newtheorem{lemma}[theorem]{Lemma}
\newtheorem*{theorem*}{Theorem}

\theoremstyle{definition}

\theoremstyle{remark}

\newcommand{\R}{\mathbb{R}}
\newcommand{\Q}{\mathbb{Q}}
\newcommand{\Z}{\mathbb{Z}}
\newcommand{\N}{\mathbb{N}}
\def\C{\mathbb{C}}

\renewcommand{\H}{\mathbb{H}}

\newcommand{\z}{\mathfrak{z}}

\newcommand{\zxz}[4]{\begin{pmatrix} #1 & #2 \\ #3 & #4 \end{pmatrix}}
\newcommand{\abcd}{\zxz{a}{b}{c}{d}}
\newcommand{\leg}[2]{\left( \frac{#1}{#2} \right)}

\newcommand{\calF}{\mathcal{F}}

\newcommand{\calH}{\mathcal{H}}

\newcommand{\calL}{\mathcal{L}}

\renewcommand{\a}{\alpha}

\newcommand{\g}{\gamma}
\renewcommand{\l}{\lambda}
\renewcommand{\th}{\theta}
\newcommand{\vth}{\vartheta}

\newcommand{\eps}{\varepsilon}

\newcommand{\sm}{\setminus}

\renewcommand{\t}{\tau}

\newcommand{\re}{\operatorname{Re}}

\newcommand{\flo}[1]{\lfloor #1\rfloor}
\newcommand{\pflo}[1]{\left\lfloor #1\right\rfloor}

\newcommand{\SL}{{\text {\rm SL}}}

\newcommand{\G}{\Gamma}

\newcommand{\Li}{\operatorname{Li}}

\newif\ifdefs

\defstrue

\newif\ifdiscs
\discstrue

\numberwithin{equation}{section}
\numberwithin{theorem}{section}

\allowdisplaybreaks

\ifdefs
\else
\excludecomment{extradetails}
\excludecomment{extradetailsproof}
\fi

\ifdiscs
\else
\excludecomment{discussion}
\fi

\title{Generalized $L$-functions related to the Riemann zeta function}
\author{Kathrin Bringmann}
\author{Ben Kane}
\author{Srimathi Varadharajan}
\address{University of Cologne, Department of Mathematics and Computer Science, Weyertal 86-90, 50931 Cologne, Germany}
\email{kbringma@math.uni-koeln.de}
\address{Department of Mathematics, University of Hong Kong, Pokfulam, Hong Kong}
\email{bkane@hku.hk}
\address{Cell Biology and Anatomy Department, University of Calgary, 3330 Hospital Drive NW, Calgary, Alberta, CANADA T2N 1N4}
\email{srimathi.Varadharaja@ucalgary.ca}
\date{\today}
\keywords{generalized $L$-functions, modular forms, regularized Mellin transforms, Riemann zeta function}
\subjclass[2020]{11F12, 11M41, 11M06}

\begin{document}

\begin{abstract}
	In this paper, we construct generalized $L$-functions associated to meromorphic modular forms of weight $\frac12$ for the theta group with a single simple pole in the fundamental domain. We then consider their behaviour towards $i\infty$ and relate this to the Riemann zeta function.
\end{abstract}
\maketitle

\section{Introduction and statement of results}

Arithmetic information $c(n)$ for $n\in\N$ can be naturally encoded in a so-called Dirichlet series, defined for $\re(s)$ sufficiently large, 
\[
	L(s)=\sum_{n=1}^{\infty} \frac{c(n)}{n^s}.
\]
One calls the function $L$ a (classical) {\it $L$-function} if it satisfies the following three properties:
\begin{enumerate}
	\item It has a meromorphic continuation to the whole complex plane.
	
	\item It has an {\it Euler product}, for $\re(s)$ sufficiently large,
	\[
		L(s)=\prod_{p} \frac{1}{1-f_p\left(p^{-s}\right)p^{-s}}.
	\]
	where the product runs over all primes and $f_p$ is a polynomial.
	
	\item There is some archimedean information $L_{\infty}(s)$ such that the function 
	\[
		\Lambda(s):=L_{\infty}(s)L(s)
	\]
	satisfies, for some $k$ and $\varepsilon=\pm 1$, a {\it functional equation}
	\[
		\Lambda(k-s)=\varepsilon \Lambda(s).
	\]
\end{enumerate}
In \cite{BKgenL}, the first two authors constructed functions $L_z$ via regularized Mellin transforms of weight two meromorphic modular forms on $\SL_2(\Z)$ with a single simple pole at one point $z$ in the fundamental domain (not on the imaginary axis). These functions satisfy a functional equation and their behaviour as $z\to i\infty$ was shown to be related to a classical $L$-function (see \cite[Theorem 1.1 and Theorem 4.3]{BKgenL}), so they were named generalized $L$-functions.

Slightly more formally, for $k\in\frac12\Z$ we call a collection of functions $s\mapsto\calL_z(s)$ defined for $s\in\C$ for almost all $z\in\H$ {\it generalized $L$-functions of weight $k$} if they satisfy the following properties:
\begin{enumerate}
	\item We have the functional equations
	\[
		\calL_z(k-s) = \pm\calL_z(s).
	\]
	
	\item There exist ``simple'' functions $C_{\ell,s}$ and $D_{\ell,s}$ such that, for almost all $x\in[0,1]$,
	\begin{equation}\label{eqn:limClassical}
		\lim_{y\to \infty} \left(\calL_{z}(s) - \sum_{\ell\ge0} C_{\ell,s}(x) y^{s-\ell} - \sum_{\ell\ge0} D_{\ell,s}(x) y^{k-s-\ell}\right)
	\end{equation}
	is a classical completed $L$-function.
\end{enumerate}
We call the classical $L$-function whose completion one obtains from the limit \eqref{eqn:limClassical}, the {\it$L$-function associated to $\calL_z(s)$.}

Letting $j_\vth$ be the Hauptmodul for $\G_\vth$, explicitly given in \eqref{eqn:jthetadef}, we (formally) define
\begin{equation}\label{eqn:Fzdef}
	F_z(s) := s\left(\frac12-s\right)j_\vth(z)\int_0^\infty \frac{\vth(it)}{j_\vth(it)-j_\vth(z)}t^{s-1} dt.
\end{equation}
The main result in this paper is the fact that the functions $F_z$ give a collection of generalized $L$-functions related to the Riemann zeta function in the sense that \eqref{eqn:limClassical} is the Riemann $\xi$-function
\[
	\xi(s) := \frac{s(s-1)}{2\pi^\frac s2}\G\left(\frac s2\right)\zeta(s),
\]
which satisfies the functional equation
\begin{equation}\label{eqn:xifunctional}
	\xi(1-s)=\xi(s).
\end{equation}
Here $\Gamma(s)$ is the $\Gamma$-function, defined by $\Gamma(s):=\int_{0}^{\infty}t^{s-1}e^{-t}dt$ for $\re(s)>1$ and extended meromorphically to the entire complex plane.

\begin{theorem}\label{thm:main}
	The functions $F_z$ are a collection of generalized $L$-functions related to the Riemann zeta function in the following sense:
	\begin{enumerate}[leftmargin=*,label=\rm{(\arabic*)}]
		\item If $z$ is not $\Gamma_\vth$-equivalent to a point on $i\R^+$, then the integral in \eqref{eqn:Fzdef} converges for all $s\in\C$.
		
		\item If $z$ is not $\Gamma_\vth$-equivalent to a point on $i\R^+$, then we have 
		\[
			F_z\left(\frac{1}{2}-s\right)=F_{z}(s).
		\]
		
		\item With $C_{\ell,s}(x)$ and $D_{\ell,s}(x)$ defined in \eqref{eqn:Cells} and \eqref{eqn:Dells}, respectively, the limit \eqref{eqn:limClassical} is $\xi(2s)$.
	\end{enumerate}
\end{theorem}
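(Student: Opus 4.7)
The plan is to handle the three parts in order, with (1) and (2) being structural and (3) the substantive step. For (1), as $t\to\infty$ the Fourier expansion $\vth(it)=1+O(e^{-\pi t})$ and the pole of $j_\vth$ at $i\infty$ (so $j_\vth(it)^{-1}$ vanishes exponentially) force the integrand to decay exponentially. As $t\to 0^+$, I would substitute $u=1/t$ and invoke $\vth(i/t)=\sqrt{t}\,\vth(it)$ and $j_\vth(i/t)=j_\vth(it)$ (since $S\in\Gamma_\vth$) to reduce to the previous case, with an extra factor of $t^{-1/2}$ dominated by the exponential decay. The hypothesis on $z$ guarantees $j_\vth(it)\ne j_\vth(z)$ for every $t>0$, so the integrand has no interior singularities and the integral converges absolutely, defining an entire function of $s$.

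\textbf{Part (2).} Substituting $u=1/t$ in the integral and using the same two modular identities, a short computation yields
\[
  \int_0^\infty \frac{\vth(it)}{j_\vth(it)-j_\vth(z)}\,t^{s-1}\,dt \;=\; \int_0^\infty \frac{\vth(iu)}{j_\vth(iu)-j_\vth(z)}\,u^{(1/2-s)-1}\,du,
\]
i.e.\ the integral at $s$ equals the integral at $\tfrac12-s$. Since the prefactor $s(\tfrac12-s)\,j_\vth(z)$ is symmetric under $s\mapsto\tfrac12-s$, the functional equation follows.

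\textbf{Part (3).} The strategy is to extract $\xi(2s)$ as the $z$-independent core of $F_z(s)$ and to match the $z$-dependent remainder with the subtracted asymptotic series. The identity
\[
  \frac{j_\vth(z)}{j_\vth(it)-j_\vth(z)} \;=\; -1 + \frac{j_\vth(it)}{j_\vth(it)-j_\vth(z)}
\]
formally splits $F_z(s)$ into the divergent principal term $-s(\tfrac12-s)\int_0^\infty \vth(it)\,t^{s-1}\,dt$ and a remainder carrying all of the $z$-dependence. Regularizing the principal term by Riemann's trick (split at $t=1$, apply $\vth(i/t)=\sqrt{t}\,\vth(it)$ on $(0,1)$, and subtract the constant pieces of $\vth$ at $t\to 0,\infty$) produces $-s(\tfrac12-s)\cdot 2\pi^{-s}\Gamma(s)\zeta(2s)=\xi(2s)$ together with explicit boundary terms. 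For the remainder, I would split at $t=y$ and $t=y^{-1}$: on the outer tail $t>y$ one has $|j_\vth(it)|\gg|j_\vth(z)|$, so expanding $(j_\vth(it)-j_\vth(z))^{-1}$ geometrically in $j_\vth(z)/j_\vth(it)$ and integrating term by term against $\vth(it)\,t^{s-1}$ produces the $C_{\ell,s}(x)y^{s-\ell}$ hierarchy, with $x$-dependence entering through the Fourier expansion of $j_\vth(z)^m$ at $i\infty$. The $D_{\ell,s}(x)y^{1/2-s-\ell}$ hierarchy then follows from the inner tail $t<y^{-1}$ by the symmetry established in Part (2).

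\textbf{Main obstacle.} The technical heart is the precise matching of these polynomial-in-$y$ series with the prescribed definitions in \eqref{eqn:Cells} and \eqref{eqn:Dells}. One must carefully combine (a) the boundary contributions from the Hecke regularization of the principal integral, (b) the coefficients arising from the geometric expansion on the outer tail, and (c) the companion contributions on the inner tail, and verify that their collective polynomial-in-$y$ part is exactly $\sum_\ell C_{\ell,s}(x)y^{s-\ell}+\sum_\ell D_{\ell,s}(x)y^{1/2-s-\ell}$. Once this bookkeeping is done, the remaining exponentially small contributions vanish in the limit $y\to\infty$, leaving $\xi(2s)$.
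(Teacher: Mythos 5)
Your overall strategy coincides with the paper's: parts (1) and (2) are argued exactly as in Proposition \ref{prop:exist}, and for part (3) you use the same splitting identity $\frac{j_\vth(z)}{j_\vth(it)-j_\vth(z)}=-1+\frac{j_\vth(it)}{j_\vth(it)-j_\vth(z)}$, the same Riemann-type regularization of $\int_0^\infty\vth(it)t^{s-1}\,dt$ (Lemma \ref{lem:thetazeta}), and the same polylogarithm hierarchy obtained from a geometric expansion, with the integral cut at $t=y^{\pm1}$ as in Proposition \ref{prop:limit}.

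Two points in your part (3) would, however, fail as written. First, your justification of the geometric expansion on the outer tail is wrong at its most delicate point: for $t$ near $y$ one has $|j_\vth(it)|\asymp e^{\pi t}$ while $|j_\vth(z)|\asymp e^{\pi y}$, so $|j_\vth(z)/j_\vth(it)|\approx 1$ at the endpoint $t=y$ and the claim $|j_\vth(it)|\gg|j_\vth(z)|$ is false there. The expansion is usable only because $x\notin\Z$ forces $e^{\pi ix}\neq 1$: the term-by-term integrals $\int_y^\infty e^{n\pi(y-t)}t^{s-1}\,dt=e^{n\pi y}(n\pi)^{-s}\Gamma(s,n\pi y)$ are each of size $y^{s-1}/n$, so the resulting series $\sum_n e^{-\pi inx}n^{-\ell}$ converges only conditionally when $\ell=1$; this is exactly where the incomplete-gamma asymptotics (Lemma \ref{lem:incGammaBound}) and the bound $(1-e^{\pi(1/t+iz)})^{-1}\ll_x 1$ are needed in the paper, and your sketch offers no substitute. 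Second, the bookkeeping you defer is misallocated: the $C_{\ell,s}(x)y^{s-\ell}$ terms do not all come from the outer tail $t>y$. The $\Li_\ell(e^{\pi ix})$ contributions (those with rising factorial $(1-s)_{\ell-1}$) arise from the middle range $t_0<t<y$, from the correction term $\frac{j_\vth(it)}{j_\vth(it)-j_\vth(z)}$ left over by your splitting identity, which there must be expanded in powers of the \emph{reciprocal} ratio $j_\vth(it)/j_\vth(z)$; only the $\Li_\ell(e^{-\pi ix})$ half comes from $t>y$, and symmetrically for $D_{\ell,s}$. If you assign all of $C_{\ell,s}$ to the outer tail, the polynomial-in-$y$ parts will not match \eqref{eqn:Cells}, and the truncations at $\lfloor\sigma\rfloor$ and $\lfloor\frac12-\sigma\rfloor$ (which record which powers of $y$ actually diverge) will not appear.
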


The paper is organized as follows. In Section \ref{sec:prelim}, we recall the properties of the theta function and the zeta function. In Section \ref{sec:meroconstruct}, we construct 
the Hauptmodul $j_\vth$ for $\G_\vth$. In Section \ref{sec:MellinTransform}, we show that $F_z$ converges absolutely for all $s\in\C$ under a mild assumption on $z$ and then finally, for $z=x+iy$ with $x\notin\Z$ fixed, take the limit $y\to\infty$ to prove Theorem \ref{thm:main}.

\section*{Acknowledgements}

The first author is supported by the Deutsche Forschungsgemeinschaft (DFG) Grant No. BR 4082/5-1. The research of the second author was supported by grants from the Research Grants Council of the Hong Kong SAR, China (project numbers HKU 17303618 and 17314122). Part of the research was conducted while the third author was a postdoctoral fellow at the University of Hong Kong.

\section{Preliminaries}\label{sec:prelim}

\subsection{The theta function}

Define the {\it theta function} ($q:=e^{2\pi i\t}$)
\[
	\vth(\t):=\sum_{n\in\Z}q^{\frac{n^2}{2}}.
\]
It is well-known that $\vth$ is a modular form of weight $\frac12$ for the {\it theta group}
\[
	\G_\vth := \left\langle S,T^2\right\rangle = \left\{\abcd\in\SL_2(\Z) : a\equiv d\pmod{2}\text{ and }c\equiv b\pmod{2}\right\},
\]
where $S:=\left(\begin{smallmatrix}0&-1\\ 1&0\end{smallmatrix}\right)$ and $T:=\left(\begin{smallmatrix}1&1\\ 0&1\end{smallmatrix}\right)$. The theta function has the following growth behavior.

\begin{lemma}\label{lem:ThetaGrowth} Let $\tau=u+iv$.
	\ \begin{enumerate}[leftmargin=*,label=\rm{(\arabic*)}]
		\item As $v\to\infty$, we have 
		\[
			\vartheta(\t) = 1+O\left(e^{-\pi v}\right).
		\]
		
		\item As $v\to 0^+$, we have 
		\[
			\vartheta(iv) = \frac{1}{\sqrt{v}} + O\left(\frac{e^{-\frac{\pi}{v}}}{\sqrt{v}}\right).
		\]
	\end{enumerate}
\end{lemma}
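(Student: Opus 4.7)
Both statements follow from elementary properties of $\vartheta$. For part (1), the plan is to separate the $n=0$ term and bound the remaining terms by a geometric series. Writing
\[
	\vartheta(\tau) - 1 = 2\sum_{n\ge 1} q^{n^2/2},
\]
one has $|q^{n^2/2}| = e^{-\pi v n^2}$, so the tail is bounded by
\[
	2\sum_{n\ge 1} e^{-\pi v n^2} \le 2e^{-\pi v} + 2\sum_{n\ge 2} e^{-\pi v n} = 2e^{-\pi v} + \frac{2e^{-2\pi v}}{1-e^{-\pi v}},
\]
which is $O(e^{-\pi v})$ as $v\to\infty$. The only thing to be careful about is showing the implied constant is uniform, which follows since the geometric bound is valid for $v$ bounded away from $0$.

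For part (2), the plan is to invoke the modular transformation law of $\vartheta$ under $S\in\Gamma_\vartheta$, namely
\[
	\vartheta\!\left(-\tfrac{1}{\tau}\right) = \sqrt{-i\tau}\,\vartheta(\tau),
\]
and specialize to $\tau = i/v$, for which $-1/\tau = iv$ and $\sqrt{-i\tau} = 1/\sqrt{v}$. This yields
\[
	\vartheta(iv) = \frac{1}{\sqrt{v}}\,\vartheta\!\left(\frac{i}{v}\right).
\]
Since $1/v\to\infty$ as $v\to 0^+$, one then applies part (1) to $\vartheta(i/v) = 1 + O(e^{-\pi/v})$ and multiplies through by $1/\sqrt{v}$ to obtain the claimed asymptotic.

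Neither step is genuinely an obstacle: the main content is merely recording the standard modular transformation of $\vartheta$ and the trivial bound on its Fourier expansion. The only mild subtlety is ensuring the branch of $\sqrt{-i\tau}$ is chosen so that it equals $1/\sqrt{v}$ (positive real) when $\tau = i/v$ with $v>0$, which is the conventional choice making $\vartheta$ modular of weight $\tfrac12$ on $\Gamma_\vartheta$.
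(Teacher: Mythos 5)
Your proposal is correct and follows the same route as the paper: part (1) from the Fourier expansion of $\vartheta$ (with the geometric-series tail bound made explicit), and part (2) from the inversion formula $\vartheta(\tau)=(-i\tau)^{-1/2}\vartheta(-1/\tau)$ specialized to $\tau=iv$ combined with part (1). No gaps.
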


\begin{proof}\leavevmode\newline
	(1) The claim follows directly from the Fourier expansion of $\vth$.\\
	(2) It is well-known that we have
	\begin{equation}\label{eqn:varthetamodular}
		\vartheta(\t)=(-i\t)^{-\frac12}\vartheta\left(-\frac{1}{\t}\right).
	\end{equation}
	Taking $\t=it$ and plugging in the Fourier expansion for $\vartheta(\frac{i}{t})$ yields the claim.
\end{proof}

\subsection{A regularized Mellin transform of the theta function}

We recall the following well-known relation between the theta function and the Riemann zeta function that goes back to Riemann.\footnote{See \cite[Subsection 2.6]{Titchmarsch} for a modern reference.}

\begin{lemma}\label{lem:thetazeta}
	For any $t_0>0$, we have
	\[
		\frac{2}{s(2s-1)}\xi(2s) = \int_{t_0}^\infty (\vth(it)-1)t^{s-1} dt + \int_0^{t_0} \left(\vth(it)-\frac{1}{\sqrt{t}}\right)t^{s-1} dt - \frac{t_0^s}{s} + \frac{t_0^{s-\frac12}}{s-\frac12}.
	\]
\end{lemma}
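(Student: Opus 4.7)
The plan is to derive this from Riemann's classical identity
\[
	\frac{2\Gamma(s)\zeta(2s)}{\pi^s} = \int_0^\infty (\vth(it)-1) t^{s-1}\, dt,
\]
valid for $\re(s)>\frac12$, and then split the integral at $t_0$ and reorganize the low-$t$ piece using the modular behavior of $\vth$.

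First I would verify the classical identity for $\re(s)>\frac12$ by plugging in the Fourier expansion $\vth(it)-1=2\sum_{n\geq 1} e^{-\pi n^2 t}$, interchanging sum and integral (justified by absolute convergence in this range; the decay at $\infty$ is exponential, and $\re(s)>\frac12$ gives integrability at $0$ via Lemma~\ref{lem:ThetaGrowth}(2)), and recognizing $\int_0^\infty e^{-\pi n^2 t} t^{s-1}\,dt = \Gamma(s)(\pi n^2)^{-s}$. This gives $2\pi^{-s}\Gamma(s)\zeta(2s)$, which by the definition of $\xi$ equals $\frac{2}{s(2s-1)}\xi(2s)$.

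Next I would split $\int_0^\infty = \int_{t_0}^\infty + \int_0^{t_0}$ and, on the lower piece, add and subtract $t^{-\frac12}$ inside the integrand to write
\[
	\int_0^{t_0}(\vth(it)-1)t^{s-1}\,dt = \int_0^{t_0}\!\left(\vth(it)-\tfrac{1}{\sqrt{t}}\right) t^{s-1}\,dt + \int_0^{t_0}\!\left(\tfrac{1}{\sqrt{t}}-1\right) t^{s-1}\,dt.
\]
The explicit integral is elementary and equals $\frac{t_0^{s-\frac12}}{s-\frac12}-\frac{t_0^s}{s}$ (for $\re(s)>\frac12$). Combining the pieces produces exactly the right-hand side of the claim, so the stated identity holds in the half-plane $\re(s)>\frac12$.

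Finally, I would upgrade the identity to all $s\in\C$ by analytic continuation. The key point is that the two integrals appearing in the statement are now entire functions of $s$: the integral over $[t_0,\infty)$ is entire because $\vth(it)-1=O(e^{-\pi t})$ by Lemma~\ref{lem:ThetaGrowth}(1), and the integral over $(0,t_0]$ is entire because $\vth(it)-\frac{1}{\sqrt{t}}=O(t^{-\frac12}e^{-\pi/t})$ by Lemma~\ref{lem:ThetaGrowth}(2), which kills any power of $t$. The explicit rational terms $-\frac{t_0^s}{s}+\frac{t_0^{s-\frac12}}{s-\frac12}$ are meromorphic in $s$, and the left-hand side $\frac{2}{s(2s-1)}\xi(2s)$ is likewise meromorphic. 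Since the two meromorphic functions agree on the half-plane $\re(s)>\frac12$, they agree everywhere. There is no real obstacle here; the only thing to be careful about is the justification of entire-ness of the two integrals (which is immediate from the Lemma~\ref{lem:ThetaGrowth} estimates and dominated convergence).
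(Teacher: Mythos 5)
Your proof is correct, and it is essentially the standard argument: the paper gives no proof of this lemma at all, simply citing Titchmarsh, and your derivation (Mellin transform of $\vth(it)-1$ for $\re(s)>\frac12$, splitting at $t_0$, subtracting $t^{-\frac12}$ on the lower range, then meromorphic continuation using the super-polynomial decay of $\vth(it)-t^{-\frac12}$ as $t\to 0^+$ from Lemma \ref{lem:ThetaGrowth}) is exactly the classical proof being referenced. No gaps.
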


\subsection{Asymptotics for special functions}

For $y>0$ and $s\in\C$ we define the {\it incomplete gamma function} by
\[
	\Gamma(s,y):=\int_y^{\infty} t^{s-1}e^{-t}dt.
\]
Denoting, for $\ell\in\N$, the {\it rising factorial} by $(a)_\ell:=\prod_{j=0}^{\ell-1}(a+j)$, we require the following asymptotic behavior for $\Gamma(s,y)$, which may be found in \cite[8.11.2]{NIST}.

\begin{lemma}\label{lem:incGammaBound}
	For $s\in\C$ and $N\in\N$ we have, as $y\to\infty$,
	\[
		\Gamma(s,y)= y^{s-1}e^{-y}\left(\sum_{j=0}^{N-1}\frac{(-1)^j(1-s)_{j}}{y^j}+O\left(y^{-N}\right)\right).
	\]
\end{lemma}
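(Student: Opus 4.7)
The plan is to derive the expansion through repeated integration by parts. Starting from the definition, integrating by parts with $u = t^{s-1}$ and $dv = e^{-t}\,dt$ gives the recursion
\[
\Gamma(s,y) = y^{s-1} e^{-y} + (s-1)\,\Gamma(s-1, y).
\]
Iterating this identity $N$ times yields
\[
\Gamma(s,y) = y^{s-1} e^{-y} \sum_{j=0}^{N-1} \frac{(s-1)(s-2)\cdots(s-j)}{y^j} + (s-1)(s-2)\cdots(s-N)\,\Gamma(s-N, y),
\]
and the identity $(s-1)(s-2)\cdots(s-j) = (-1)^j (1-s)_j$ matches the coefficients stated in the lemma.

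The remaining task is to estimate the tail $(-1)^N(1-s)_N\,\Gamma(s-N, y)$. For fixed $s$, I would pick any auxiliary $M \geq N$ large enough that $\re(s) - M - 1 < 0$, apply the expansion above with $M$ in place of $N$, and then estimate the single tail term at depth $M$. On the range $t \geq y$ one has the crude pointwise bound $|t^{s-M-1}| = t^{\re(s)-M-1} \leq y^{\re(s)-M-1}$, which immediately gives
\[
|\Gamma(s-M, y)| \leq y^{\re(s)-M-1} \int_y^\infty e^{-t}\,dt = y^{\re(s)-M-1} e^{-y}.
\]
After factoring out the common $y^{s-1} e^{-y}$, the finitely many extra terms indexed by $j = N, \dots, M-1$ and this tail bound all contribute $O(y^{-N})$ as $y \to \infty$, giving the claimed asymptotic.

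The argument is entirely elementary, so I do not expect a genuine obstacle; the one subtlety is the initial step of choosing an auxiliary $M$ depending on $\re(s)$, which is needed to justify the monotonicity bound $|t^{s-M-1}| \leq y^{\re(s)-M-1}$ used to estimate the remainder. Once this is in place, the expansion follows from integration by parts and a trivial bound on $\int_y^\infty e^{-t}\,dt$.
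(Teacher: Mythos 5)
Your argument is correct and complete. Note, however, that the paper does not actually prove this lemma: it simply cites \cite[8.11.2]{NIST} and records the expansion as a known fact. What you have written is the standard derivation underlying that reference, namely repeated integration by parts giving $\Gamma(s,y)=y^{s-1}e^{-y}+(s-1)\Gamma(s-1,y)$, iteration to depth $N$, and an estimate of the remainder. Your handling of the remainder is the one genuinely nontrivial point for general complex $s$, and you treat it properly: since $(1-s)_N\,\Gamma(s-N,y)$ cannot be bounded by monotonicity unless $\re(s)-N-1<0$, you push the expansion to an auxiliary depth $M\ge N$ with $\re(s)-M-1<0$, absorb the finitely many intermediate terms $j=N,\dots,M-1$ into the $O(y^{-N})$ error, and then bound $|\Gamma(s-M,y)|\le y^{\re(s)-M-1}e^{-y}$ by the trivial estimate. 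This yields a self-contained elementary proof where the paper offers only a citation; the only cost is a few lines of bookkeeping, and the benefit is that the asymptotic is verified for all $s\in\C$ without appeal to an external source.
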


We also require bounds for the confluent hypergeometric function ${_1F_1}$. Assuming that $s\notin\Z$, \cite[13.7.1]{NIST} implies that, as $y\to\infty$, for any $N\in\N_0$
\begin{equation*}
	{_1F_1}(s;s+1;y) \sim \frac{se^y}{y}\left(\sum_{j=0}^N \frac{(1-s)_j}{y^j}+O_{s,N}\left(y^{-N-1}\right)\right).
\end{equation*}

\section{Construction of meromorphic modular forms}\label{sec:meroconstruct}

In this section, we construct meromorphic modular forms $\calH_z$ whose regularized Mellin transforms give the collection of generalized $L$-functions that are used to prove Theorem \ref{thm:main}.

\subsection{The Hauptmodul $j_\vth$ for the theta group}

In this subsection, we construct a Hauptmodul for the theta group $\G_\vth$ and discuss its properties. We recall that a {\it Hauptmodul} for a congruence subgroup $\G\subseteq\SL_2(\Z)$ is a $\G$-invariant meromorphic function $j_\G$ for which every $\G$-invariant meromorphic function may be written as a rational function in $j_\G$. For this, we require the modular {\it lambda function}
\[
	\lambda(\tau):=\frac{\theta_2(\tau)^4}{\vth(\tau)^4},
\]
where 
\[
	\th_2(\t) := \sum_{n\in\Z} e^{\pi i\left(n+\frac12\right)^2\t}.
\]
It is well-known that $\l$ is a Hauptmodul for $\G(2)$ and satisfies the identities 
\begin{equation}\label{eqn:lambdainversion}
	\l\left(-\frac1\t\right) = 1 - \l(\t),\qquad \l(\t+1) = \frac{\l(\t)}{\l(\t)-1},\qquad \l\leg{1}{1-\t} = \frac{1}{1-\l(\t)}.
\end{equation}
We have the following growth towards the cusps.\footnote{There are three cusps of $\G(2)$, namely, $0$, $1$, and $i\infty$, each with cusp width $2$.} For this, let $\H^*:=\H\cup\Q\cup\{i\infty\}$ and $\C^*:=\C\cup\{i\infty\}$.

\begin{lemma}\label{lem:lambdagrowth}
	\ \begin{enumerate}
		\item As $v\to\infty$, we have 
		\begin{equation*}
			\l(\t) = 16e^{\pi i\t} + O\left(e^{-2\pi v}\right),\qquad \frac{1}{\l(\t)} = \frac{e^{-\pi i\t}}{16} + O(1).
		\end{equation*}
	
		\item As $v\to\infty$, we have
		\begin{equation*}
			\l\left(-\frac1\t\right) = 1 - 16e^{\pi i\t} + O\left(e^{-2\pi v}\right),\qquad \frac{1}{\l\left(-\frac1\t\right)-1} = -\frac{e^{-\pi i\t}}{16} + O(1).
		\end{equation*}

		\item As $v\to\infty$, we have
		\begin{equation*}
			\lambda\left(\frac{\tau}{\tau+1}\right)=\frac{e^{-\pi i\tau}}{16} +O(1).
		\end{equation*}
		
		\item The function $\l$ is a bijection from $\G(2)\sm\H^*$ to $\C^*$.
	\end{enumerate}
\end{lemma}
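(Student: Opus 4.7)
The plan is to derive (1) directly from the Fourier expansions of $\th_2$ and $\vth$, to propagate this to (2) and (3) by applying the identities \eqref{eqn:lambdainversion}, and to deduce (4) from the classical fact that $\l$ is a Hauptmodul for $\G(2)$ combined with the cusp-value computations coming from (1)--(3).

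For (1), I would observe that the smallest exponents $\pi i(n+\frac12)^2$ in the definition of $\th_2$ are achieved (twice) at $n=0,-1$, giving $\th_2(\t) = 2e^{\pi i\t/4}(1+O(e^{-2\pi v}))$ and hence $\th_2(\t)^4 = 16e^{\pi i\t}(1+O(e^{-2\pi v}))$. Combined with $\vth(\t)^{-4} = 1+O(e^{-\pi v})$ from Lemma \ref{lem:ThetaGrowth}(1), this yields the first asymptotic. The expansion of $1/\l(\t)$ then follows by inverting the series, using that $|e^{-\pi i\t}|=e^{\pi v}$, so that multiplying the error term $O(e^{-\pi v})$ by $e^{-\pi i\t}/16$ yields only $O(1)$.

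Part (2) is immediate from (1) and the first identity in \eqref{eqn:lambdainversion}. For (3), I would observe that $\t/(\t+1) = 1/(1-(-1/\t))$, so applying the third identity in \eqref{eqn:lambdainversion} with $\t$ replaced by $-1/\t$ together with the first identity gives $\l(\t/(\t+1)) = 1/(1-\l(-1/\t)) = 1/\l(\t)$, and the claim then follows from part (1).

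For (4), I would invoke the classical theorem that $\l$ induces a biholomorphism $\G(2)\bs\H\to\C\sm\{0,1\}$ (equivalently, that $\l\colon\H\to\C\sm\{0,1\}$ is the universal cover with deck group $\G(2)/\{\pm I\}$, as in the uniformization of the Legendre family). To upgrade this to a bijection $\G(2)\bs\H^*\to\C^*$ it remains to match the three cusps $i\infty$, $0$, and $1$ of $\G(2)$ with the three missing values $0$, $1$, and $\infty$; this is exactly what parts (1), (2), and (3) establish, since they show $\l(\t)\to 0$ as $\t\to i\infty$, $\l(-1/\t)\to 1$ as $\t\to i\infty$ (so $\l\to 1$ at the cusp $0$), and $\l(\t/(\t+1))\to\infty$ as $\t\to i\infty$ (so $\l\to\infty$ at the cusp $1$). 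The main obstacle is merely citing the classical uniformization cleanly; once that is in hand, the cusp matching is forced by (1)--(3).
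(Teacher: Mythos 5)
Your proposal is correct and follows essentially the same route as the paper: part (1) from the Fourier expansions, parts (2) and (3) by propagating through the identities in \eqref{eqn:lambdainversion} to the key relation $\l\bigl(\frac{\t}{\t+1}\bigr)=\frac{1}{\l(\t)}$, and part (4) from the classical fact that $\l$ is a Hauptmodul for $\G(2)$. The only (immaterial) differences are that you derive the relation in (3) from the first and third identities rather than the first and second, and that you spell out the cusp matching in (4) more explicitly than the paper does.
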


\begin{proof}\leavevmode\newline
	(1) The statements follow immediately from the Fourier expansion of $\l$.\\
	(2) The first claim follows from \eqref{eqn:lambdainversion} and part (1) and the second follows by subtracting $1$ from both sides and dividing.\\
	(3) Similarly, we combine the first and second equations of \eqref{eqn:lambdainversion} to obtain
	\begin{equation}\label{eqn:1/lambdatau}
		\lambda\left(\frac{\tau}{\tau+1}\right)=\frac{1}{\lambda(\tau)}.
	\end{equation}
	The claim now follows immediately from part (1).\\
	(4) This follows since a $\G$-invariant meromorphic function is a Hauptmodul for $\G$ if and only if it is a bijection from $\G\sm\H^*$ to $\C^*$.
\end{proof}

Define
\begin{equation}\label{eqn:jthetadef}
	j_\vth(\tau):=\frac{1}{\lambda(\tau)\left(1-\lambda(\tau)\right)}.
\end{equation}
We now show that $j_\vth$ is a Hauptmodul for $\Gamma_\vth$, and give its growth towards the cusps.

\begin{lemma}\label{lem:jthetagrowth}
	\ \begin{enumerate}
		\item The function $j_\vth$ is $\Gamma_\vth$-invariant.
		
		\item As $v\to\infty$, we have
		\[
			j_\vth\left(-\frac{1}{\tau}\right)=j_\vth(\tau)=\frac{e^{-\pi i \tau}}{16} +O(1).
		\]
		In particular, as $v\to 0^+$,
		\[
			j_\vth(iv)= e^{\frac{\pi}{v}} +O(1).
		\]
		
		\item As $v\to\infty$, we have
		\[
			j_\vth\left(\frac{\tau}{\tau+1}\right)=-256e^{2\pi i \tau}+O\left(e^{-3\pi v}\right).
		\]
		
		\item The function $j_\vth$ is a bijection from $\G_\vth\sm\H^*$ to $\C^*$. Every meromorphic modular function on $\G_\vth\sm\H$ can be written as a rational function in $j_\vth$.
	\end{enumerate}
\end{lemma}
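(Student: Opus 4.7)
The plan is to establish each of the four assertions by lifting the corresponding facts about $\l$ from Lemma \ref{lem:lambdagrowth}. For part (1), since $\G_\vth=\langle S,T^2\rangle$, it suffices to check invariance on these generators. As $T^2\in\G(2)$, we have $\l(\tau+2)=\l(\tau)$, and hence $j_\vth(\tau+2)=j_\vth(\tau)$. Under $S$, the first identity of \eqref{eqn:lambdainversion} gives $\l(-1/\tau)=1-\l(\tau)$, so the two factors in the denominator of $j_\vth$ are merely interchanged, yielding $j_\vth(-1/\tau)=j_\vth(\tau)$; in particular, this is the first identity in part (2). For the asymptotic in part (2) as $v\to\infty$, I would combine the two expansions in Lemma \ref{lem:lambdagrowth}(1): since $1/\l(\tau)=e^{-\pi i\tau}/16+O(1)$ and $1-\l(\tau)=1+O(e^{-\pi v})$, multiplying gives $j_\vth(\tau)=e^{-\pi i\tau}/16+O(1)$. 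The $v\to 0^+$ asymptotic then follows by applying this bound to $j_\vth(iv)=j_\vth(i/v)$, noting that $i/v\to i\infty$.

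For part (3), I would use \eqref{eqn:1/lambdatau}, which reads $\l(\tau/(\tau+1))=1/\l(\tau)$. A short algebraic simplification yields
\[
j_\vth\left(\frac{\tau}{\tau+1}\right)=\frac{1}{\l(\tau/(\tau+1))\bigl(1-\l(\tau/(\tau+1))\bigr)}=-\frac{\l(\tau)^2}{1-\l(\tau)}.
\]
Substituting the first few Fourier coefficients of $\l(\tau)$ at $i\infty$, then squaring and inverting $1-\l(\tau)$ as a geometric series, extracts the leading term $-256\,e^{2\pi i\tau}$ with the claimed exponentially small error.

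For part (4), I would combine the bijectivity of $\l$ on $\G(2)\backslash\H^*$ (Lemma \ref{lem:lambdagrowth}(4)) with the facts that $\G_\vth/\G(2)$ has order $2$, generated by the class of $S$, and that on $\C^*\cong\G(2)\backslash\H^*$ this class acts via $\l\leftrightarrow 1-\l$. Since the map $w\mapsto 1/(w(1-w))$ identifies precisely the pairs $w\leftrightarrow 1-w$ on $\C^*$ and is otherwise injective, it descends to a bijection $\G_\vth\backslash\H^*\to\C^*$, identifying $\G_\vth\backslash\H^*$ with $\P^1(\C)$ via $j_\vth$; rationality of every meromorphic modular function in $j_\vth$ follows from the standard fact that meromorphic functions on $\P^1(\C)$ are rational. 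The main technical point is the error analysis in part (3), where the direct $O(1)$ bound in Lemma \ref{lem:lambdagrowth}(1) is insufficient and one must track several terms of the Fourier expansion of $\l$ before performing the algebraic manipulation.
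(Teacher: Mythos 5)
Your proof is correct and follows essentially the same route as the paper's: invariance via the generators $S$ and $T^2$, the asymptotics in (2) and (3) by multiplying the expansions of $1/\l$ and $1/(1-\l)$ (after reducing part (3) to $-\l(\t)^2/(1-\l(\t))$ via \eqref{eqn:1/lambdatau}), and bijectivity via the fact that the fibers of $w\mapsto 1/(w(1-w))$ on $\C^*$ are exactly the orbits of $w\mapsto 1-w$ --- which the paper establishes through the explicit quadratic \eqref{eqn:quadratica2a1} and you phrase equivalently via the index-$2$ quotient $\G_\vth/\G(2)$ acting by $\l\mapsto 1-\l$. One small observation: your (correct) derivation of the $v\to 0^+$ statement actually yields $j_\vth(iv)=\tfrac{1}{16}e^{\pi/v}+O(1)$, so the constant in the displayed claim appears to be off by a factor of $16$; this is an issue with the lemma as stated rather than with your argument, and is harmless for the later applications, which use only the order of growth.
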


\begin{proof}\leavevmode\newline
	(1) It is enough to show that $	j_\vth$ is invariant under the generators of $\G_\th$, $S$, and $T^2$, which follows from the first equation in \eqref{eqn:lambdainversion} by a direct calculation.\\
	(2) The first identity follows immediately from (1). For the second identity, we first note that, by \eqref{eqn:lambdainversion},
	\begin{equation}\label{eqn:1/1-lambdatau}
		\frac{1}{1-\lambda(\tau)}= 1+O\left(e^{\pi i \tau}\right).
	\end{equation}
	Combining \eqref{eqn:1/1-lambdatau} with \eqref{eqn:1/lambdatau} and Lemma \ref{lem:lambdagrowth} (3), we conclude that, as $v\to\infty$,
	\[
		j_\vth(\t) = \frac{1}{\l(\t)}\frac{1}{1-\l(\t)} = \left(\frac{e^{-\pi i\t}}{16}+O(1)\right)\left(1+O\left(e^{\pi i\t}\right)\right) = \frac{e^{-\pi i\t}}{16} + O(1).
	\]
	(3) A short calculation using \eqref{eqn:1/lambdatau} and \eqref{eqn:1/1-lambdatau} shows that
	\[
		j_\vth\left(\frac{\t}{\t+1}\right) = -256e^{2\pi i\t} + O\left(e^{-3\pi v}\right).
	\]
	(4) To show surjectivity, let $c\in\C^*\sm\{0\}$. Then we have 
	\[
		j_\vth(\t) = c \Leftrightarrow \l(\t)^2 - \l(\t) + \frac1c = 0.
	\]
	By Lemma \ref{lem:lambdagrowth} (4), for any root $\a$ of the polynomial $x^2-x+\frac1c$ there exists $\t\in\H$ such that $\l(\t)=\a$. For $c=0$, letting $\t\to i\infty$ in part (3) implies that $j_\vth(1)=0$. So $j_\vth$ is surjective.

	To show that $j_\vth$ is injective, suppose for contradiction that $j_\vth(\t_1)=j_\vth(\t_2)$ with $\t_1$ and $\t_2$ not $\G_\vth$-equivalent. Then 
	\[
		\lambda\left(\tau_1\right)\left(1-\lambda(\tau_1)\right)=\lambda(\tau_2)\left(1-\lambda(\tau_2)\right).
	\]
	Setting $\a_1:=\lambda(\tau_1)$ and $\a_2:=\lambda(\tau_2)$ and rearranging, we have 
	\begin{equation}\label{eqn:quadratica2a1}
		\a_2^2-\a_2 +\a_1-\a_1^2=0.
	\end{equation}
	Since we have a quadratic equation in $\a_2$, this has exactly two solutions (counting multiplicity) in $\C$. One directly checks that $\a_2=\a_1$ and $\a_2=1-\a_1$ are both solutions to \eqref{eqn:quadratica2a1}.
	If $\a_1\neq \frac{1}{2}$, then these are distinct solutions, while for $\a_1=\frac{1}{2}$ we see that \eqref{eqn:quadratica2a1} becomes $\a_2^2-\a_2+\frac{1}{4}=0$, in which case $\a_2=\frac{1}{2}=\a_1=1-\a_1$ is a double root.
	We see that in both cases the two solutions (counting multiplicity) to \eqref{eqn:quadratica2a1} are $\a_2=\a_1$ and $\a_2=1-\a_1$.
	
	If $\a_2=\a_1$, then by Lemma \ref{lem:lambdagrowth} (4) we conclude that $\t_2$ is $\G(2)$-equivalent to $\t_1$. Since $\G(2)\subseteq\G_\vth$, this contradicts the assumption that $\t_1$ and $\t_2$ are not $\G_\vth$-equivalent. Hence $\a_2=1-\a_1$. From the first equation of \eqref{eqn:lambdainversion}, we conclude that $\l(\t_2)=\l(-\frac{1}{\t_1})$.
	By Lemma \ref{lem:lambdagrowth} (4), there exists $\gamma\in \G(2)$ such that 
	\[
		\t_2 = \g\frac{-1}{\t_1} = \g \circ S\t_1. 
	\]
	Since $\g\in\G(2)\subset\G_\vth$ and $S\in\G_\vth$, we have $\g\circ S\in\G_\vth$, which contradicts the assumption that $\t_1$ and $\t_2$ are not $\G_\vth$-equivalent. We therefore conclude that $j_\vth$ is a $\G_\vth$-invariant meromorphic function and is a bijection from $\G_\vth\sm\H^*$ to $\C^*$. Hence it is a Hauptmodul for $\G_\vth$ by the equivalence noted in the proof of Lemma \ref{lem:lambdagrowth} (4).
\end{proof}

\subsection{Definition of the meromorphic modular forms}\label{sec:Hzdef} 

For $z\in\H$, define
\[
	\calH_{z}(\tau):=\frac{j_\vth(z)\vartheta(\tau)}{j_\vth(\tau)-j_\vth(z)}.
\]
A direct calculation yields the following properties of $\calH_z$.

\begin{lemma}\label{lem:calHprop}
	\ \begin{enumerate}
		\item The function $\calH_z$ is modular for $\G_\vth$ of weight $\frac12$. Moreover, it has a pole at $\tau=\mathfrak{\z}$ if and only if $\mathfrak{\z}$ is $\Gamma_\vth$-equivalent to $z$.
		
		\item The function $z\mapsto\calH_z(\t)$ is $\G_\vth$-invariant.
		
		\item For $\tau\in \H$ fixed, we have 
		\[
			\lim_{z\to i\infty} \calH_{z}(\tau) =\vartheta(\tau).
		\]
	\end{enumerate}
\end{lemma}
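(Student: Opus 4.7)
The plan is to verify the three claims separately, each being a short consequence of the properties of $\vth$ and $j_\vth$ already established.

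For part (1), by Lemma \ref{lem:jthetagrowth} (1) the denominator $j_\vth(\tau)-j_\vth(z)$ is $\G_\vth$-invariant in $\tau$, while $j_\vth(z)$ is a constant with respect to $\tau$. Hence the $\tau$-transformation law of $\calH_z$ is inherited entirely from that of $\vth$, yielding weight $\tfrac12$ modularity for $\G_\vth$. For the pole locus, note that $j_\vth$ is holomorphic on $\H$ (its poles occurring only at the cusps) and that $\vth$ is nonvanishing on $\H$, as may be seen from the Jacobi triple product
\[
\vth(\tau)=\prod_{n\ge 1}(1-q^n)\bigl(1+q^{n-\frac12}\bigr)^{2}.
\]
Thus no cancellation can remove a zero of the denominator, and by the bijectivity statement in Lemma \ref{lem:jthetagrowth} (4), the equation $j_\vth(\zeta)=j_\vth(z)$ holds for $\zeta\in\H$ if and only if $\zeta$ is $\G_\vth$-equivalent to $z$.

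Part (2) is immediate: the only occurrence of $z$ on the right-hand side of the definition of $\calH_z(\tau)$ is through $j_\vth(z)$ (once in the numerator and once in the denominator), and $j_\vth$ is $\G_\vth$-invariant by Lemma \ref{lem:jthetagrowth} (1).

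For part (3), I would divide both numerator and denominator of $\calH_z(\tau)$ by $j_\vth(z)$, obtaining
\[
\calH_z(\tau)=\frac{\vth(\tau)}{\dfrac{j_\vth(\tau)}{j_\vth(z)}-1}.
\]
By Lemma \ref{lem:jthetagrowth} (2) we have $j_\vth(z)\to\infty$ as $z\to i\infty$, so for fixed $\tau\in\H$ (at which $j_\vth(\tau)$ is finite) the ratio $j_\vth(\tau)/j_\vth(z)$ tends to $0$, from which the claimed limit follows by continuity. The whole argument is quite direct; the one nontrivial input is the nonvanishing of $\vth$ on $\H$ used in part (1), which I do not expect to pose any serious obstacle.
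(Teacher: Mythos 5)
The paper offers no argument for this lemma (it is introduced with ``a direct calculation yields''), so there is no proof to compare against; judging your write-up on its own terms, parts (1) and (2) are correct and appropriately justified --- the invariance of $j_\vth$ under $\G_\vth$ in both variables, the nonvanishing of $\vth$ on $\H$ via the triple product, the holomorphy of $j_\vth$ on $\H$, and the bijectivity statement of Lemma \ref{lem:jthetagrowth} (4) are exactly the inputs needed, and you supply a detail (nonvanishing of $\vth$) that the paper leaves implicit.

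Part (3), however, contains a genuine error: your own displayed formula
\[
\calH_z(\tau)=\frac{\vth(\tau)}{\dfrac{j_\vth(\tau)}{j_\vth(z)}-1}
\]
does not give the claimed limit. As $z\to i\infty$ we have $j_\vth(z)\to\infty$ by Lemma \ref{lem:jthetagrowth} (2), so the denominator tends to $0-1=-1$ and the right-hand side tends to $-\vth(\tau)$, not $\vth(\tau)$. You cannot assert that ``the claimed limit follows by continuity'' from a formula whose limit is manifestly the negative of the claim. What your computation actually exposes is a sign inconsistency between the printed definition of $\calH_z$ and the statement of part (3): with $\calH_z(\tau)=\frac{j_\vth(z)\vth(\tau)}{j_\vth(\tau)-j_\vth(z)}$ the limit is $-\vth(\tau)$, and this is corroborated by the identity $\frac{j_\vth(z)}{j_\vth(it)-j_\vth(z)}=-1+\frac{j_\vth(it)}{j_\vth(it)-j_\vth(z)}$ used in the proof of Proposition \ref{prop:limit}, where the $-1$ carries exactly this sign. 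A correct write-up should either prove $\lim_{z\to i\infty}\calH_z(\tau)=-\vth(\tau)$ and flag the discrepancy, or note that the denominator in the definition must be $j_\vth(z)-j_\vth(\tau)$ for the stated limit to hold; silently passing from $-\vth(\tau)$ to $\vth(\tau)$ is the step that fails.
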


\section{Regularized Mellin transforms and the proof of Theorem \ref{thm:main}}\label{sec:MellinTransform}

In this section, we investigate the properties of $F_z$ defined in \eqref{eqn:Fzdef} and prove Theorem \ref{thm:main}.

\subsection{Convergence and functional equation}

We first show that,
 under
\rm
 a mild assumption on $z$,
the function $F_z$ 
\rm
converges for all $s\in\C$.

\begin{proposition}\label{prop:exist}
	The function $F_{z}$ is well-defined for $z\in\H$ that is not $\G_\vth$-equivalent to any point on $i\R^+$. Moreover, we have 
	\begin{equation}\label{E:func}
		F_{z}\left(\frac{1}{2}-s\right)=F_z(s).
	\end{equation}
\end{proposition}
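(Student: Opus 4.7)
I would start by rewriting \eqref{eqn:Fzdef} in terms of $\calH_z$ from Section \ref{sec:Hzdef}. Since $\calH_z(\tau)=j_\vth(z)\vartheta(\tau)/(j_\vth(\tau)-j_\vth(z))$,
\[
F_z(s) \;=\; s\left(\tfrac12-s\right)\int_0^\infty \calH_z(it)\,t^{s-1}\,dt.
\]
By Lemma \ref{lem:calHprop}(1), the hypothesis that $z$ is not $\Gamma_\vth$-equivalent to any point on $i\R^+$ is exactly the statement that $\calH_z(it)$ has no poles for $t>0$, so only convergence at $t\to 0^+$ and $t\to\infty$ needs to be addressed.

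At $t\to\infty$, Lemmas \ref{lem:ThetaGrowth}(1) and \ref{lem:jthetagrowth}(2) give $\vartheta(it)=1+O(e^{-\pi t})$ and $j_\vth(it)=\tfrac{1}{16}e^{\pi t}+O(1)$, so $\calH_z(it)=O(e^{-\pi t})$ and the tail $\int_1^\infty \calH_z(it)t^{s-1}dt$ converges absolutely for every $s\in\C$. To handle the piece on $(0,1]$, the key identity is the transformation law
\[
\calH_z(it) \;=\; t^{-1/2}\calH_z(i/t),
\]
which follows from $j_\vth(-1/\tau)=j_\vth(\tau)$ (since $S\in\Gamma_\vth$; Lemma \ref{lem:jthetagrowth}(1)) together with the theta transformation \eqref{eqn:varthetamodular}. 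A change of variables $u=1/t$ then converts $\int_0^1 \calH_z(it)t^{s-1}dt$ into $\int_1^\infty \calH_z(iu)\,u^{(1/2-s)-1}du$, which converges absolutely for all $s\in\C$ by the same exponential decay.

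Putting the two halves together yields
\[
F_z(s) \;=\; s\left(\tfrac12-s\right)\left[\int_1^\infty \calH_z(it)\,t^{s-1}\,dt + \int_1^\infty \calH_z(it)\,t^{(1/2-s)-1}\,dt\right],
\]
which exhibits $F_z$ as an entire function of $s$ and establishes well-definedness. The functional equation \eqref{E:func} is then immediate: the bracketed expression is manifestly symmetric under $s\leftrightarrow \tfrac12-s$ (it merely interchanges the two integrals), and so is the prefactor $s(\tfrac12-s)$.

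The argument is a direct analogue of Riemann's classical derivation of the functional equation for $\xi(s)$ (compare Lemma \ref{lem:thetazeta}), with $\calH_z(it)$ here playing the role that $\vartheta(it)-1-\tfrac{1}{\sqrt t}$ plays in the classical case; because $\calH_z(it)$ already decays at both cusps, no boundary terms appear. I expect the only substantive step to be verifying the $S$-transformation of $\calH_z$, and no serious obstacle beyond that routine algebraic check.
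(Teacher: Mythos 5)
Your argument is correct, and it rests on exactly the same ingredients as the paper's proof: the absence of poles of $\calH_z(i t)$ on $t>0$ (which the paper extracts from the injectivity statement in Lemma \ref{lem:jthetagrowth} (4) rather than from Lemma \ref{lem:calHprop} (1), but these are the same fact), the exponential decay coming from Lemma \ref{lem:ThetaGrowth} and Lemma \ref{lem:jthetagrowth} (2), and the substitution $t\mapsto \frac1t$ combined with \eqref{eqn:varthetamodular} and the $S$-invariance of $j_\vth$. The only organizational difference is that you fold the integral over $(0,1]$ onto $[1,\infty)$ via $\calH_z(it)=t^{-1/2}\calH_z(i/t)$, obtaining a manifestly symmetric representation that settles convergence near $t=0$ and the functional equation in one stroke, whereas the paper bounds the two tails separately (using $j_\vth(iv)=e^{\pi/v}+O(1)$ as $v\to 0^+$ for the lower tail) and only afterwards applies $t\mapsto\frac1t$ to deduce \eqref{E:func}. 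Your packaging is closer to Riemann's classical derivation and arguably cleaner; the paper's direct tail estimates have the minor advantage of making the decay rate of the integrand near $0$ explicit, which is reused in the asymptotic analysis of Proposition \ref{prop:limit}. Either way, the proof is complete as written.
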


\begin{proof}
	We first show that $F_z$ is well-defined. Since $j_\vth$ is $\G_\vth$-invariant, we may assume that $z$ lies in the fundamental domain $\calF_\vth:=\calF\cup T^{-1}\calF\cup(-T^{-1}S\calF)$	for $\G_\vth$, where $\calF$ is the standard fundamental domain for $\SL_2(\Z)$.
	Since $z\neq it$ by assumption, Lemma \ref{lem:jthetagrowth} (4) moreover implies that
	\[
		j_\vth(it) - j_\vth(z) \ne 0.
	\]
	Hence the integrand is finite for every $t>0$ and therefore, for any $0<t_1<t_2<\infty$, the integral
	\[
		\int_{t_1}^{t_2} \frac{\vartheta(it)}{j_\vth(it)-j_\vth(z)} t^{s-1} dt
	\]
	is well-defined and finite. Taking $t_1=\frac{1}{2(y+y^{-1})}$ and $t_2=2(y+y^{-1})$, it remains to show that
	\[
		\int_{0}^{\frac{1}{2\left(y+y^{-1}\right)}} \frac{\vartheta(it)}{j_\vth(it)-j_\vth(z)} t^{s-1} dt<\infty\qquad \text{ and }\qquad \int_{2\left(y+y^{-1}\right)}^{\infty} \frac{\vartheta(it)}{j_\vth(it)-j_\vth(z)} t^{s-1} dt<\infty.
	\]
	For $0<t<\frac{1}{2(y+y^{-1})}$, Lemma \ref{lem:jthetagrowth} (2) implies that
	\[
		\left|\frac{1}{j_\vth(it)-j_\vth(z)}\right| \ll e^{-\frac\pi t}. 
	\]
	Combining this with Lemma \ref{lem:ThetaGrowth} (2) hence yields, that for $t\to 0^+$,
	\[
		\left|\frac{\vartheta(it)}{j_\vth(it)-j_\vth(z)} t^{s-1}\right|\ll t^{\sigma-\frac{3}{2}} e^{-\frac{\pi}{t}},
	\]
	where here and throughout $\sigma:=\re(s)$. Therefore
	\[
		\int_{0}^{\frac{1}{2\left(y+y^{-1}\right)}} \left|\frac{\vartheta(it)}{j_\vth(it)-j_\vth(z)} t^{s-1}\right|dt\ll \int_{0}^{\frac{1}{2\left(y+y^{-1}\right)}}t^{\sigma-\frac{3}{2}} e^{-\frac{\pi}{t}}dt<\infty.
	\]

	Similarly, for $t>2(y+y^{-1})$, Lemma \ref{lem:jthetagrowth} (2) implies that, as $t\to\infty$,
	\[
		\left|\frac{1}{j_\vth(it)-j_\vth(z)}\right|\ll \frac{1}{\left|e^{\pi t} - e^{\pi\left(y+y^{-1}\right)}\right| +O(1)}\ll \frac{1}{e^{\pi t}-e^{\frac{\pi t}{2}}} \ll e^{-\pi t}.
	\]
	Combining this with Lemma \ref{lem:ThetaGrowth} (1) hence yields that for $t\to\infty$
	\[
		\left|\frac{\vartheta(it)}{j_\vth(it)-j_\vth(z)} t^{s-1}\right|\ll t^{\sigma-1} e^{-\pi t}.
	\]
	Therefore 
	\[
		\int_{2y}^{\infty}\left|\frac{\vartheta(it)}{j_\vth(it)-j_\vth(z)} t^{s-1}\right|dt\ll \int_{2y}^{\infty}t^{\sigma-\frac{3}{2}} e^{- \pi t}dt<\infty.
	\]

	The functional equation \eqref{E:func} now follows by the change of variables $t\mapsto\frac1t$, \eqref{eqn:varthetamodular}, and the invariance of $j_\vth$ under inversion.
\end{proof}

\subsection{Proof of Theorem \ref{thm:main}}

To show Theorem \ref{thm:main}, we require the following.

\begin{proposition}\label{prop:limit}
	For every $x\notin\Z$, we have, with $z=x+iy$,
	\begin{multline*}
		\lim_{y\to\infty} \left({\vphantom{\sum_{\ell=1}^{\left\lfloor\re\left(\frac12-s\right)\right\rfloor}}}F_{z}(s) + \left(\frac{1}{2}-s\right)y^s +sy^{\frac12-s} - s\left(\frac{1}{2}-s\right)\sum_{\ell=1}^{\lfloor\sigma\rfloor} \frac{(1-s)_{\ell-1}}{\pi^\ell}\Li_\ell\left(e^{\pi ix}\right)y^{s-\ell}\right.
		\\
		+ s\left(\frac{1}{2}-s\right)\sum_{\ell=1}^{\lfloor\sigma\rfloor} 
		\frac{(s+1-\ell)_{\ell-1}}{\pi^\ell}\Li_\ell\left(e^{-\pi ix}\right)y^{s-\ell}\\
		- s\left(\frac{1}{2}-s\right)\sum_{\ell=1}^{\left\lfloor\frac12-\sigma\right\rfloor} \frac{\left(s+\frac12\right)_{\ell-1}}{\pi^\ell}\Li_\ell\left(e^{\pi ix}\right)y^{\frac12-s-\ell}
		\\
		\left.+ s\left(\frac{1}{2}-s\right)\sum_{\ell=1}^{\left\lfloor\frac12-\sigma\right\rfloor} \frac{\left(\frac32-s-\ell\right)_{\ell-1}}{\pi^\ell}\Li_\ell\left(e^{-\pi ix}\right)y^{\frac12-s-\ell}\right) = \xi(2s).
	\end{multline*}
\end{proposition}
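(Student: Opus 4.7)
The plan is to exploit the functional equation $F_z(\frac{1}{2}-s)=F_z(s)$ from Proposition \ref{prop:exist} to reduce to analyzing a single Mellin integral over $[1,\infty)$. Setting $I(t):=j_\vth(z)\vth(it)/(j_\vth(it)-j_\vth(z))$ and $G(s):=\int_1^\infty I(t)t^{s-1}dt$, the change of variable $t\mapsto 1/t$ together with \eqref{eqn:varthetamodular} and the $\G_\vth$-invariance of $j_\vth$ yields $\int_0^1 I(t)t^{s-1}dt=G(\frac{1}{2}-s)$, so
\[
F_z(s)=s\left(\frac{1}{2}-s\right)\left[G(s)+G\left(\frac{1}{2}-s\right)\right].
\]
Each block of corrections in the proposition is manifestly invariant under $s\leftrightarrow\frac{1}{2}-s$ (the $y^s$ and $y^{\frac{1}{2}-s}$ terms swap, and each pair of polylog sums indexed by $y^{s-\ell}$ and $y^{\frac{1}{2}-s-\ell}$ swap), so it suffices to determine the asymptotic expansion of $G(s)$ as $y\to\infty$ and then apply the symmetry.

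To analyze $G(s)$ I split the integral at $t=y$. On the tail $[y,\infty)$, Lemma \ref{lem:jthetagrowth} (2) ensures $|j_\vth(it)|>|j_\vth(z)|$ for $y$ sufficiently large, so the geometric expansion
\[
\frac{j_\vth(z)}{j_\vth(it)-j_\vth(z)}=\sum_{k=1}^\infty\left(\frac{j_\vth(z)}{j_\vth(it)}\right)^k
\]
converges uniformly. Using Lemmas \ref{lem:ThetaGrowth} (1) and \ref{lem:jthetagrowth} (2) to write $\vth(it)=1+O(e^{-\pi t})$ and $j_\vth(z)/j_\vth(it)=e^{\pi(y-t)}e^{-\pi ix}(1+O(e^{-\pi t})+O(e^{-\pi y}))$, the $k$-th term reduces to $e^{-\pi ikx}(\pi k)^{-s}e^{\pi ky}\G(s,\pi ky)$ up to exponentially small error. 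Lemma \ref{lem:incGammaBound} gives $(\pi k)^{-s}e^{\pi ky}\G(s,\pi ky)\sim\sum_{j\ge 0}(-1)^j(1-s)_j(\pi k)^{-j-1}y^{s-1-j}$; summing over $k$ and invoking the identity $(-1)^{\ell-1}(1-s)_{\ell-1}=(s+1-\ell)_{\ell-1}$ produces the $\Li_\ell(e^{-\pi ix})y^{s-\ell}$ family of corrections.

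On the middle piece $[1,y]$ the reverse estimate holds and I expand
\[
\frac{j_\vth(z)}{j_\vth(it)-j_\vth(z)}=-\sum_{k=0}^\infty\left(\frac{j_\vth(it)}{j_\vth(z)}\right)^k.
\]
The $k=0$ term contributes $-\int_1^y(\vth(it)-1)t^{s-1}dt-(y^s-1)/s$; combined with its counterpart from $G(\frac{1}{2}-s)$ and the identity
\[
\xi(2s)=-s\left(\frac{1}{2}-s\right)\left[\int_1^\infty(\vth(it)-1)t^{s-1}dt+\int_1^\infty(\vth(it)-1)t^{-s-\frac{1}{2}}dt\right]+\frac{1}{2},
\]
which follows from Lemma \ref{lem:thetazeta} at $t_0=1$ after rewriting $\int_0^1(\vth(it)-t^{-1/2})t^{s-1}dt$ as $\int_1^\infty(\vth(iu)-1)u^{-s-\frac{1}{2}}du$ via $u=1/t$, this yields the limit $\xi(2s)$ together with the elementary corrections $(\frac{1}{2}-s)y^s$ and $sy^{\frac{1}{2}-s}$ (the constant $\frac{1}{2}$ arising as $(\frac{1}{2}-s)+s$ from the two halves). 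The $k\ge 1$ terms integrate to $-e^{\pi ikx}(\pi k)^{-s}e^{-\pi ky}\int_{\pi k}^{\pi ky}e^uu^{s-1}du$; the upper endpoint is handled via $\int_0^{\pi ky}e^uu^{s-1}du=\frac{(\pi ky)^s}{s}{_1F_1}(s;s+1;\pi ky)$ and the ${_1F_1}$ asymptotic quoted just before Section \ref{sec:meroconstruct}, the lower endpoint contributes $O(1)$ killed by $e^{-\pi ky}$, and summing over $k$ produces the $\Li_\ell(e^{\pi ix})y^{s-\ell}$ corrections.

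The main obstacle is the uniform control of the geometric expansions near $t=y$, where $|j_\vth(it)/j_\vth(z)|$ approaches $1$, and the justification of interchanging summation over $k$ with the $y\to\infty$ limit. I would handle this by truncating each geometric series at a finite index $N>\sigma$ and bounding the tail directly using the absolute convergence established in Proposition \ref{prop:exist}, while invoking the explicit $O(y^{-N})$ remainders in Lemma \ref{lem:incGammaBound} and in the ${_1F_1}$ asymptotic. Matching all surviving terms with those in the proposition statement and applying the symmetry $s\leftrightarrow\frac{1}{2}-s$ completes the proof.
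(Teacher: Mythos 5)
Your overall route is essentially the paper's, repackaged: the paper splits $\int_0^\infty$ at $\frac1y,t_0,y$ and treats four pieces, each via the decomposition $\frac{j_\vth(z)}{j_\vth(it)-j_\vth(z)}=-1+\frac{j_\vth(it)}{j_\vth(it)-j_\vth(z)}$ followed by a geometric expansion of the exponential, incomplete-gamma/${}_1F_1$ asymptotics, and Lemma \ref{lem:thetazeta}; you fold $[0,1]$ onto $[1,\infty)$ via $t\mapsto\frac1t$ to get $F_z(s)=s(\frac12-s)(G(s)+G(\frac12-s))$ and treat two pieces plus symmetry. Your symmetrization is clean and correct (so is the identity you derive from Lemma \ref{lem:thetazeta} at $t_0=1$, including the constant $\frac12$), your bookkeeping of the $\Li_\ell$ coefficients matches \eqref{eqn:Fz01/yfinal}--\eqref{eqn:Fzyinftyfinal}, and the observation that the correction terms are $s\leftrightarrow\frac12-s$ symmetric is exactly how the paper reduces to $\sigma\ge\frac14$.

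The genuine gap is the step you yourself flag, and your proposed repair does not close it. First, the premise ``$|j_\vth(it)|>|j_\vth(z)|$ on $[y,\infty)$'' is not guaranteed at the endpoint: by Lemma \ref{lem:jthetagrowth} (2), $|j_\vth(iy)|$ and $|j_\vth(x+iy)|$ agree to leading order $\frac{e^{\pi y}}{16}$ and differ only in the $O(1)$ terms, so the ratio $|j_\vth(z)/j_\vth(it)|$ can exceed $1$ near $t=y$ and the geometric series need not converge there at all; ``truncating and bounding the tail'' is then vacuous. Second, even where the series converges, truncating at a \emph{fixed} $N>\sigma$ leaves a tail bounded by $\frac{e^{-\pi(N+1)(t-y)}}{|1-e^{-\pi ix}|}$, whose integral against $t^{\sigma-1}$ over $[y,\infty)$ is of size $\asymp_x y^{\sigma-1}/N$ --- not $o(1)$ once $\sigma\ge1$. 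Third, the $\ell=1$ stratum of your expansion requires summing $\sum_{k\ge1}e^{\pm\pi ikx}/k$, which converges only conditionally, so no argument based on absolute tail bounds can produce the $\Li_1(e^{\pm\pi ix})$ terms; one needs Abel-summation-type input. The paper avoids all three problems by keeping the exact kernel $\frac{e^{\pi(1/t+iz)}}{1-e^{\pi(1/t+iz)}}$ with a multiplicative $1+O(e^{-\pi/t})$ error (see \eqref{eqn:jit/jit-jz}), regularizing the resonant endpoint as $\lim_{\eps\to0^+}\int^{(1-\eps)y/2}$ in \eqref{eqn:intBKgenL}, and invoking the detailed interchange arguments of \cite[(4.13)--(4.17)]{BKgenL}. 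To make your write-up rigorous you would need to import that regularization (or an equivalent Abel/dominated-convergence argument uniform in $y$) rather than a fixed-index truncation.
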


\begin{proof}
	We split, for some $t_0>0$,
	\[
		F_z(s) = s\left(\frac{1}{2}-s\right) \left(F_{z,0,\frac1y}(s) + F_{z,\frac1y,t_0}(s) + F_{z,t_0,y}(s) + F_{z,y,\infty}(s)\right),
	\]
	where for $0\le y_1\le y_2\le\infty$ we define
	\[
		F_{z,y_1,y_2}(s) := j_\vth(z)\int_{y_1}^{y_2} \frac{\vth(\t)}{j_\vth(\t)-j_\vth(z)}t^{s-1} dt.
	\]
	Since both sides of the claim are invariant under $s\mapsto\frac12-s$ by Proposition \ref{prop:exist} and \eqref{eqn:xifunctional}, we may assume without loss of generality that $\sigma\ge\frac14$.
	
	We claim that in this case, we have
	\begin{align}\label{eqn:Fz01/yfinal}
		F_{z,0,\frac1y}(s) &= \sum_{\ell=1}^{\pflo{\frac12-\sigma}} \frac{\left(\frac32-s-\ell\right)_{\ell-1}}{\pi^\ell}\Li_\ell\left(e^{-\pi ix}\right)y^{\frac12-s-\ell} + o_{x,s}(1),\\
		\label{eqn:Fz1/yt0final}
		F_{z,\frac1y,t_0}(s) &= -\int_\frac1y^{t_0} \left(\vth(it)-\frac{1}{\sqrt t}\right)t^{s-1} dt + \frac{y^{\frac12-s}}{s-\frac12} - \frac{t_0^{s-\frac12}}{s-\frac12} - \frac{\Li_1\left(e^{\pi ix}\right)}{\pi y^{s+\frac12}} \hspace{-.05cm}+\hspace{-.05cm} o_{x,s}(1),\\
		\nonumber
		F_{z,t_0,y}(s) &= -\int_{t_0}^y (\vth(it)-1)t^{s-1} dt + \frac{t_0^{s}}{s} - \frac{y^{s}}{s}\\
		\nonumber
		&\hspace{5cm}- \sum_{\ell=1}^{\flo\sigma} \frac{(1-s)_{\ell-1}}{\pi^\ell}\Li_\ell\left(e^{\pi ix}\right)y^{s-\ell} + o_{x,s}(1),\\
		\label{eqn:Fzyinftyfinal}
		F_{z,y,\infty}(s) &= \sum_{\ell=1}^{\flo\sigma} \frac{(s+1-\ell)_{\ell-1}}{\pi^\ell} \Li_\ell\left(e^{-\pi ix}\right)y^{s-\ell} + o_{x,s}(1).
	\end{align}

	The proofs of \eqref{eqn:Fz01/yfinal}--\eqref{eqn:Fzyinftyfinal} are all similar and also analogous to the proofs of \cite[(4.5)--(4.8)]{BKgenL}. We hence only show \eqref{eqn:Fz1/yt0final} (we choose the case \eqref{eqn:Fz1/yt0final} instead of \eqref{eqn:Fz01/yfinal} to demonstrate how the Mellin transform of $\vth$ naturally appears), leaving the other cases to the interested reader. We split
	\begin{align}\label{eqn1}
		F_{z,\frac1y,t_0}(s) &= \int_\frac1y^{t_0} \frac{j_\vth(z)\left(\vth(it)-\frac{1}{\sqrt t}\right)}{j_\vth(it)-j_\vth(z)}t^{s-1} dt + \int_\frac1y^{t_0} \frac{j_\vth(z)}{j_\vth(it)-j_\vth(z)}t^{s-\frac32} dt,\\
		\nonumber
		F_{z,t_0,y}(s) &= \int_{t_0}^y \frac{j_\vth(z)(\vth(it)-1)}{j_\vth(it)-j_\vth(z)}t^{s-1} dt + \int_{t_0}^y \frac{j_\vth(z)}{j_\vth(it)-j_\vth(z)}t^{s-1} dt.
	\end{align}
	Rewriting
	\begin{equation*}
		\frac{j_\vth(z)}{j_\vth(it)-j_\vth(z)} = -1 + \frac{j_\vth(it)}{j_\vth(it)-j_\vth(z)}
	\end{equation*}
	in \eqref{eqn1} yields
	\begin{multline}\label{eqn:Fz1/yt0}
		F_{z,\frac1y,t_0}(s) = -\int_\frac1y^{t_0} \left(\vth(it)-\frac{1}{\sqrt t}\right)t^{s-1} dt + \int_\frac1y^{t_0} \frac{j_\vth(it)\left(\vth(it)-\frac{1}{\sqrt t}\right)}{j_\vth(it)-j_\vth(z)}t^{s-1} dt\\
		- \int_\frac1y^{t_0} t^{s-\frac32} dt + \int_\frac1y^{t_0} \frac{j_\vth(it)}{j_\vth(it)-j_\vth(z)}t^{s-\frac32} dt.
	\end{multline}
	The first and the third term contribute the first three terms in \eqref{eqn:Fz1/yt0final}.
	
	We next claim that
	\begin{equation}\label{eqn:Iz1/yt02}
		\lim_{y\to\infty} \int_\frac1y^{t_0} \frac{j_\vth(it)\left(\vartheta(it)-\frac{1}{\sqrt{t}}\right)}{j_\vth(it)-j_\vth(z)} t^{s-1} dt = 0.
	\end{equation}
	We split the integral into two parts, one from $\frac1y$ to $\frac{1}{\sqrt{y}}$ and one from $\frac{1}{\sqrt{y}}$ to $t_0$, and then use Lemma \ref{lem:ThetaGrowth} (2) and Lemma \ref{lem:jthetagrowth} (2). For the integral from $\frac{1}{\sqrt{y}}$ to $t_0$, Lemma \ref{lem:jthetagrowth} (2) implies that 
	\[
		\left|\frac{j_\vth(it)}{j_\vth(it)-j_\vth(z)}\right| \ll \frac{e^{\pi\sqrt{y}}}{e^{\pi y}-e^{\pi\sqrt{y}}} \ll e^{-\pi y+\pi \sqrt{y}}.
	\]
	Combining this with Lemma \ref{lem:ThetaGrowth} (2) yields, as $y\to\infty$,
	\begin{equation}\label{eqn:1/sqrtytot0}
		\left|\int_\frac{1}{\sqrt{y}}^{t_0} \frac{j_\vth(it)\left(\vartheta(it)-\frac{1}{\sqrt{t}}\right)}{j_\vth(it)-j_\vth(z)} t^{s-1} dt\right|\ll_{s,t_0} e^{-\pi y+\pi\sqrt{y}} \to 0.
	\end{equation}
	Lemma \ref{lem:jthetagrowth} (2) implies that for $\frac{1}{y}<t<t_0$
	\begin{equation}\label{eqn:jit/jit-jz}
		\frac{j_\vth(it)}{j_\vth(it)-j_\vth(z)}=-\frac{e^{\pi\left(\frac{1}{t}+i z\right)}}{1-e^{\pi\left(\frac{1}{t}+iz\right)}}\left(1+O\left(e^{-\frac{\pi}{t}}\right)\right).
	\end{equation}
	Lemma \ref{lem:ThetaGrowth} (2) and \eqref{eqn:jit/jit-jz} then gives that
	\begin{equation}\label{error}
		\frac{j_\vth(it)\left(\vartheta(it)-\frac{1}{\sqrt{t}}\right)}{j_\vth(it)-j_\vth(z)} t^{s-1}= O\left(\frac{e^{\pi\left(\frac{1}{t}+i z\right)}}{1-e^{\pi\left(\frac{1}{t}+iz\right)}}\left(1+O\left(e^{-\frac{\pi}{t}}\right)\right)t^{\sigma-\frac{3}{2}}e^{-\frac{\pi}{t}}\right).
	\end{equation}
	We then note that, for $t>\frac1y$ and $x\notin\Z$, we have
	\[
		\frac{1}{1-e^{\pi \left(\frac{1}{t}+iz\right)}} \ll_x 1, 
	\]
	Therefore \eqref{error} becomes $O_x(e^{-\pi y}t^{\sigma-\frac32})$. Hence, as
	\[
		\int_\frac1y^\frac{1}{\sqrt y} \frac{j_\vth(it)\left(\vth(it)-\frac{1}{\sqrt t}\right)}{j_\vth(it)-j_\vth(z)}t^{s-1} dt \ll_x e^{-\pi y}\int_\frac1y^\frac{1}{\sqrt y} t^{\sigma-\frac32} dt \to 0,
	\]
	where we use the fact that $\int_\frac1y^\frac{1}{\sqrt y}t^{\sigma-\frac32}dt$ grows at most polynomially in $y$. Combining this with \eqref{eqn:1/sqrtytot0} establishes \eqref{eqn:Iz1/yt02}.
	
	To evaluate the final term in \eqref{eqn:Fz1/yt0}, we plug in \eqref{eqn:jit/jit-jz} to rewrite it as
	\[
		-\lim_{\eps\to 0^+}\int_{\frac{1}{(1-\eps)y}}^{t_0} \frac{e^{\pi \left(\frac{1}{t}+ iz\right)}}{1-e^{\pi \left(\frac{1}{t}+iz\right)}}\left(1+O\left(e^{-\frac{\pi}{t}}\right)\right)t^{s-\frac{3}{2}} dt.
	\]
	Making the change of variables $t\mapsto\frac{1}{2t}$, this becomes 
	\begin{equation}\label{eqn:intBKgenL}
		-2^{\frac{1}{2}-s}\lim_{\eps\to0^+} \int_\frac{1}{2t_0}^{(1-\eps)\frac{y}{2}} \frac{e^{2\pi\left(t+\frac{iz}{2}\right)}}{1-e^{2\pi\left(t+\frac{iz}{2}\right)}} \left(1+O\left(e^{-2\pi t}\right)\right) t^{-s-\frac{1}{2}} dt.
	\end{equation}
	Up to the factor $-2^{\frac12-s}$ in front, this is precisely \cite[(4.13)]{BKgenL} with $s\mapsto s+\frac32$, $z\mapsto\frac z2$, and $t_0\mapsto2t_0$. If $\sigma+\frac32\ge1$ (i.e., $\sigma\ge-\frac12$), then we can follow \cite[(4.14)--(4.17)]{BKgenL} and use \cite[(2.4) with $x\mapsto-x$]{BKgenL} to obtain that the integral in \eqref{eqn:intBKgenL} equals the second and third terms in \cite[(4.6)]{BKgenL} with $s\mapsto s+\frac32$ and $z\mapsto\frac z2$, giving that for $\sigma\ge-\frac12$
	\[
		\lim_{\eps\to0^+} \int_\frac{1}{2t_0}^{(1-\eps)\frac{y}{2}} \frac{e^{2\pi\left(t+\frac{iz}{2}\right)}}{1-e^{2\pi\left(t+\frac{iz}{2}\right)}} \left(1+O\left(e^{-2\pi t}\right)\right) t^{-s-\frac12} dt = \frac{1}{2\pi}\Li_1\left(e^{\pi ix}\right) \left(\frac{y}{2}\right)^{-s-\frac12} + o_{x,s}(1).
	\]
	This completes the proof of \eqref{eqn:Fz1/yt0final}.
\end{proof}

We are now ready to prove Theorem \ref{thm:main}.

\begin{proof}[Proof of Theorem \ref{thm:main}]
	Parts (1) and (2) are given in Proposition \ref{prop:exist}. Taking $k=\frac12$ and 
	\begin{align}\label{eqn:Cells}
		C_{\ell,s}(x) &:=
		\begin{cases}
			\frac1s & \text{if }\ell=0,\\
			\frac1\pi\left((s+1-\ell)_{\ell-1}\Li_\ell\left(e^{-\pi ix}\right)-(1-s)_{\ell-1}\Li_\ell\left(e^{-\pi ix}\right)\right) & \text{if }\ell\in\N,
		\end{cases}\\
		\label{eqn:Dells}
		D_{\ell,s}(x) &:= C_{\ell,\frac12-s}(x),
	\end{align}
	the statement of part (3) is precisely the statement of Proposition \ref{prop:limit}.
\end{proof}

\end{document}